\numberwithin{equation}{section}
\newtheorem{theorem}{Theorem}[section]
\newtheorem{lemma}[theorem]{Lemma}
\title[Normal operators for momentum ray transforms]{Normal operators for momentum ray transforms,\\ II: Saint Venant operator}
\author[S.R. Jathar]{Shubham R. Jathar}
\address{ Indian Institute of Science Education and Research (IISER) Bhopal, India}
\email {shubham18@iiserb.ac.in}
\author[M. Kar]{Manas Kar}
\address{ Indian Institute of Science Education and Research (IISER) Bhopal, India}
\email{manas@iiserb.ac.in}
\author[V. P. Krishnan]{Venkateswaran P. Krishnan}
\address{
Centre for Applicable Mathematics, Tata Institute of Fundamental Research, India }
\email{vkrishnan@tifrbng.res.in}
\author[V. A. Sharafutdinov]{Vladimir A. Sharafutdinov}
\address{Sobolev Institute of Mathematics, 4 Koptyug Av., 630090, Novosibirsk, Russia}
\email{sharaf@math.nsc.ru}
\newcommand{\I}{\mathrm{i}}
\newcommand{\rb}{\right)}
\newcommand{\lb}{\left(}
\newcommand{\R}{{\mathbb R}}
\newcommand{\PD}{\partial}
\begin{document}

\begin{abstract}
The momentum ray transform $I_m^k$ integrates a rank $m$ symmetric tensor field $f$ on ${\R}^n$ over lines with the weight $t^k$,
$I_m^kf(x,\xi)=\int_{-\infty}^\infty t^k\langle f(x+t\xi),\xi^m\rangle\,\mathrm{d}t$. Let $N^k_m=(I^k_m)^*I^k_m$ be the normal operator of $I_m^k$.
To what extent is a symmetric $m$-tensor field $f$ determined by the data $(N_m^0f,\dots,N_m^rf)$ given for some $0\le r\le m$?
The Saint Venant operator $W^r_m$ is a linear differential operator of order $m-r$ with constant coefficients on the space of symmetric $m$-tensor fields. We derive an explicit formula expressing $W^r_mf$ in terms of $(N_m^0f,\dots,N_m^rf)$. The tensor field $W^r_mf$ represents the full local information on $f$ that can be extracted from the data $(N_m^0f,\dots,N_m^rf)$.

\medskip

\noindent{\bf Keywords.} Ray transform, inverse problems, Saint Venant operator, tensor tomography, momentum ray transform.

\noindent{\bf Mathematics Subject Classification (2020)}: Primary 44A12, Secondary 53C65.
\end{abstract}
	\maketitle

\section{Introduction}

This article is a follow-up to our prior work \cite{JKKS1}. To ensure a self-contained presentation, we have chosen to provide only a condensed version in the introduction and Section \ref{PMR}. We refer the reader to \cite{JKKS1} for more details.

Let $f$ be a Schwartz class symmetric $m$-tensor field on $\R^n$. The $k^{\mathrm{th}}$ momentum ray transform $I_{m}^{k}f$ of $f$ is defined by
\begin{equation}
I_{m}^kf(x,\xi)=\int\limits_{\R} t^{k} f_{i_1 \cdots i_m}(x+t\xi) \xi^{i_1}\cdots \xi^{i_m}\, \mathrm{d} t\quad
\big(x\in{\mathbb R}^n, \xi\in{\mathbb R}^n,|\xi|=1,\langle x,\xi\rangle=0\big).
                                                 \label{1.1}
\end{equation}
As in \eqref{1.1}, with repeating indices, the Einstein summation convention is used throughout the article.

Let $(I^k_m)^*$ be the $L^2$ adjoint of $I^k_m$.
Instead of working directly with the momentum ray transforms, we work with the associated normal operators $N^k_m=(I^k_m)^*I^k_m$.  Being an averaging operator, $N^k_m$ represents a better measurement model than the momentum ray transforms themselves. An inversion formula was obtained in  \cite{JKKS1} which recovers a symmetric $m$-tensor $f$ from the data $(N^0_mf,\dots,N^m_mf)$. The formula is reproduced in Theorem \ref{Th3.1} below.

In this work we investigate the problem of recovering a tensor field from partial data. To what extent is a symmetric $m$-tensor field $f$ determined by the data $(N_m^0f,\dots,N_m^rf)$ given for some $0\le r\le m$?

In the next section, we will recall the definition of the Saint Venant operator
\begin{equation}
W^r_m:C^\infty({\R}^n;S^m)\to C^\infty({\R}^n;S^{m-r}\otimes S^m)\quad(0\le r\le m).
                                                 \label{1.2}
\end{equation}
It is a linear differential operator of order $m-r$ with constant coefficients.
This operator was briefly mentioned in \cite[Theorem 2.17.2]{Sharafutdinov:Book:1994}, but the operator $W=W^0_m$ was widely used throughout Chapter 2 of \cite{Sharafutdinov:Book:1994}.
It is closely related to the equation
\begin{equation}
dv=f.
                           \label{1.3}
\end{equation}
where $d=\sigma\nabla$ is the inner derivative defined in Section 2.3 below.
Namely, the equation \eqref{1.3} is solvable in a simply connected domain $U\subset{\R}^n$ if and only if the right-hand side satisfies $W^0_mf=0$, see \cite[Theorem 2.2.2]{Sharafutdinov:Book:1994}. In the case of $m=2$,  the condition $W^0_2f=0$ is popular in linear elasticity and is called {\it the deformation consistency condition}, it was obtained by Saint Venant.

For $f\in{\mathcal S}({\R}^n;S^m)$, the tensor field $W^r_mf$ represents the full {\it local} information, on the field $f$, that can be extracted from the data $(I^0_mf,\dots,I^r_mf)$, see \cite[Theorem 2.17.2]{Sharafutdinov:Book:1994}. In particular, $W^r_mf$ is uniquely determined by $(N^0_mf,\dots,N^r_mf)$. The paper \cite{Mishra:Sahoo:2021} establishes that, for $f\in \mathcal{S}\left(S^m\right)$ and for $0\leq r \leq m$, the tensor field $W^{r}_m f$ can be explicitly recovered from $(I^{0}_m f,\dots, I^{r}_mf)$. In \cite[Theorem 3.1]{Mishra:Sahoo:2023}, the kernel of the momentum ray transform is described using the Saint Venant operator. It is shown that for $f\in \mathcal{S}\left(S^m\right)$, $(I^0_mf,\dots,I^r_mf)=0$ if and only if $W^r_mf=0$. We will derive an explicit formula expressing $W^r_mf$ through $(N^0_mf,\dots,N^r_mf)$; see Theorem \ref{Th3.2} below. The latter theorem is the main result of the current work.

\section*{Acknowledgements}
SRJ and VPK would like to thank the Isaac Newton Institute for Mathematical Sciences, Cambridge, UK, for support and hospitality during \emph{Rich and Nonlinear Tomography - a multidisciplinary approach} in 2023 where part of this work was done (supported by EPSRC Grant Number EP/R014604/1).
Additionally, VPK acknowledges the support of the Department of Atomic Energy,  Government of India, under
Project No.  12-R\&D-TFR-5.01-0520.
SRJ would like to thank TIFR CAM, Bangalore, for their support and hospitality during his visit, where part of this work was conducted and acknowledges the Prime Minister's Research Fellowship (PMRF) from the Government of India for his PhD work. MK was supported by the MATRICS grant (MTR/2019/001349) of SERB.
The work of VAS was performed according to the Russian Government research assignment for IM~SB~RAS, project FWNF-2022-0006.

\section{Basic definitions and main result} \label{PMR}

\subsection{Tensor algebra}\label{PMR1}
Let $T\mathbb{R}^n=\oplus_{m=0}^\infty T^m\mathbb{R}^n$ be the complex tensor algebra over $\mathbb{R}^n$. Assuming $n$ to be fixed, the notation $T^m\mathbb{R}^n$ will be often abbreviated to $T^m$.
For a fixed orthonormal basis $(e_1,\ldots, e_n)$ of $\mathbb{R}^n$, by $u_{i_1\dots i_m}=u^{i_1\dots i_m}=u(e_{i_1},\ldots, e_{i_m})$ we denote {\it coordinates} (= {\it components}) of a tensor $u\in T^m$ with respect to the basis. There is no distinction between covariant and contravariant tensors since we use orthonormal bases only.
The standard dot product on $\mathbb{R}^n$ extends to $T^m$ by
\[
\langle u,v\rangle=u^{i_1\dots i_m}\overline{v_{i_1\dots i_m}}.
\]

Let $S^m=S^m\mathbb{R}^n$ be the subspace of $T^m$ consisting of symmetric tensors.
{\it The partial symmetrization} $\sigma(i_1\dots i_m):T^{m+k}\to T^{m+k}$ in the indices $(i_1,\dots,i_m)$ is defined by
$$
\sigma(i_1\dots i_m) u_{i_1\dots i_mj_1\dots j_k}=\frac{1}{m !} \sum_{\pi \in \Pi_m} u_{i_{\pi(1)},\dots,i_{\pi(m)}j_1\dots j_k},
$$
where the summation is performed over the group $\Pi_m$ of all permutations of the set $\{1,\dots,m\}$. In particular,
$\sigma: T^m\rightarrow S^m$ is the symmetrization in all indices.
Given $u\in S^m$ and $v\in S^k$, {\it the symmetric product} $u v\in S^{m+k}$ is defined by $uv=\sigma(u\otimes v)$. Being equipped with the symmetric product, $S^*{\R}^n=\bigoplus_{m=0}^\infty S^m{\R}^n$ becomes a commutative graded algebra that is called {\it the algebra of symmetric tensors over} ${\R}^n$.

Given $u\in S^m$, let $i_u:S^k\to S^{m+k}$ be the operator of symmetric multiplication by $u$ and let $j_u:S^{m+k}\to S^k$ be the adjoint of $i_u$. These operators are written in coordinates as
\begin{align*}
\left(i_u v\right)_{i_1 \ldots i_{m+k}} & =\sigma\left(i_1 \ldots i_{m+k}\right) u_{i_1 \ldots i_m} v_{i_{m+1} \ldots i_{m+k}} \\
\left(j_u v\right)_{i_1 \ldots i_k} & =v_{i_1 \ldots i_{m+k}} u^{i_{k+1} \ldots i_{m+k}}.
\end{align*}
For the Kronecker tensor $\delta$, the notations $i_\delta$ and $j_\delta$ will be abbreviated to $i$ and $j$ respectively.

\subsection{Tensor fields}
Recall that the Schwartz space $\mathcal{S}\left(\mathbb{R}^n\right)$ is the topological vector space consisting of $C^\infty$-smooth complex-valued functions on ${\R}^n$ that decay rapidly at infinity together with all derivatives, equipped with the standard topology.
Let $\mathcal{S}\left(\mathbb{R}^n; S^m\right)=\mathcal{S}\left(\mathbb{R}^n\right)\otimes S^m$ be the topological vector space of smooth fast decaying  symmetric $m$-tensor fields, defined on $\mathbb{R}^n$. In Cartesian coordinates, such a tensor field is written as $f=(f_{i_1\dots i_m})$ with coordinates (= components) $f_{i_1\dots i_m}=f^{i_1\dots i_m}\in\mathcal{S}\left(\mathbb{R}^n\right)$ symmetric in all indices.

We use {\it the Fourier transform}
$\mathcal{F}: \mathcal{S}(\mathbb{R}^n) \rightarrow \mathcal{S}(\mathbb{R}^n),$ $f \mapsto \widehat{f}$
in the form (hereafter \textsl{i} is the imaginary unit)
\[
\mathcal{F}{f}(y)=\frac{1}{(2 \pi)^{n / 2}} \int_{\mathbb{R}^n} e^{-\textsl{i}\langle y, x\rangle} f(x)\, \mathrm{d} x.
\]
The Fourier transform $\mathcal{F}: \mathcal{S}\left(\mathbb{R}^n; S^m\right) \rightarrow \mathcal{S}\left(\mathbb{R}^n; S^m\right)$, $f \mapsto \widehat{f}$ of symmetric tensor fields is defined component-wise:
\[
\widehat{f}_{i_1 \ldots i_m}=\widehat{f_{i_1 \ldots i_m}}.
\]
The $L^2$-product on $C_0^\infty\left({\R}^n; T^m\right)$ is defined by
\begin{equation}
(f,g)_{L^2({\R}^n; T^m)}=\int_{{\R}^n}\langle f(x),g(x)\rangle\,dx.
                               \label{2.1}
\end{equation}

\subsection{Inner derivative and divergence}
The first-order differential operator
$$
d:C^\infty({\R}^n;S^m)\to C^\infty({\R}^n;S^{m+1})
$$
defined by
$$
(df)_{i_1\dots i_{m+1}}=\sigma(i_1\dots i_{m+1})\frac{\partial f_{i_1\dots i_m}}{\partial x^{i_{m+1}}}
=\frac{1}{m+1}\Big(\frac{\partial f_{i_2\dots i_{m+1}}}{\partial x^{i_1}}+\dots
+\frac{\partial f_{i_1\dots i_m}}{\partial x^{i_{m+1}}}\Big)
$$
is called {\it the inner derivative}.

{\it The divergence}
$$
\mbox{div}:C^\infty({\R}^n;S^{m+1})\to C^\infty({\R}^n;S^m)
$$
is defined by
$$
(\mbox{div}\,f)_{i_1\dots i_m}=\delta^{jk}\,\frac{\partial f_{i_1\dots i_mj}}{\partial x^k}.
$$
The operators $d$ and $-\mbox{div}$ are formally adjoint to each other with respect to the $L^2$-product \eqref{2.1}.

\subsection{The space \texorpdfstring{${\mathcal S}(T{\mathbb S}^{n-1})$}{S(TS(n-1))}}

The Schwartz space ${\mathcal S}(E)$ is well-defined for a smooth vector bundle $E\to M$ over a compact manifold with the help of a finite atlas and partition of unity subordinate to the atlas.

In particular, the Schwartz space ${\mathcal S}(T{\mathbb S}^{n-1})$ is well defined for the tangent bundle
$$
T\mathbb S^{n-1}= \{(x, \xi) \in \mathbb{R}^n \times \mathbb{S}^{n-1}:\langle x, \xi\rangle=0\}\to\mathbb{S}^{n-1},\quad(x,\xi)\mapsto\xi
$$
of the unit sphere $\mathbb S^{n-1}=\{x\in{\R}^n:|x|=1\}$.

The Fourier transform $\mathcal{F}: \mathcal{S}\left(T \mathbb{S}^{n-1}\right) \rightarrow \mathcal{S}\left(T \mathbb{S}^{n-1}\right), \varphi \mapsto \widehat{\varphi}$ is defined by
$$
\mathcal{F}{\varphi}(y, \xi)=\frac{1}{(2 \pi)^{(n-1)/2}} \int_{\xi^{\perp}} e^{-\textsl{i}\langle y, x\rangle} \varphi(x, \xi)\,\mathrm{d} x,
$$
where $\mathrm{d} x$ is the $(n-1)$-dimensional Lebesgue measure on the hyperplane $\xi^{\perp}=\left\{x \in \mathbb{R}^n : \right.$ $\langle\xi, x\rangle=0\}$.

The $L^2$-product on ${\mathcal S}(T{\mathbb S}^{n-1})$ is defined by
\begin{equation}
(\varphi,\psi)_{L^2(T\mathbb{S}^{n-1})}=\int\limits_{\mathbb{S}^{n-1}}\int\limits_{\xi^\bot}
\varphi(x,\xi)\overline{\psi(x,\xi)}\,\mathrm{d} x\,\mathrm{d}\xi,
                              \label{2.2}
\end{equation}
where $\mathrm{d}\xi$ is the $(n-1)$-dimensional Euclidean volume form on the unit sphere $\mathbb{S}^{n-1}$.

\subsection{Momentum ray transform}
It is convenient to parameterize the family of oriented lines in ${\R}^n$ by points of the manifold $T \mathbb{S}^{n-1}$. Namely, a point
$(x,\xi)\in T \mathbb{S}^{n-1}$ determines the line $\{x+t\xi:t\in{\R}\}$ through $x$ in the direction $\xi$.

For an integer $k\ge0$, {\it the momentum ray transform}
$$
I_m^k:\mathcal{S}({\R}^n;S^m)\to\mathcal{S}\left(T \mathbb{S}^{n-1}\right)
$$
is the linear continuous operator defined by \eqref{1.1}.

\subsection{Normal operators}
The formal adjoint of the momentum ray transform $I_m^k$ with respect to $L^2$-products  \eqref{2.1} and  \eqref{2.2}
\[
\left(I_m^k\right)^*: \mathcal{S}\left(T \mathbb{S}^{n-1}\right) \rightarrow C^\infty\left(\mathbb{R}^n ; S^m\right)
\]
is expressed by
$$
\big((I_m^k)^*\varphi\big)_{i_1 \ldots i_m}(x)
=\int_{\mathbb{S}^{n-1}}\langle x, \xi\rangle^k \xi_{i_1} \dots \xi_{i_m} \varphi\big(x-\langle x, \xi\rangle \xi, \xi\big)\, \mathrm{d}\xi.
$$
We emphasize that, for $\varphi \in \mathcal{S}(T \mathbb{S}^{n-1})$, the tensor field $(I^k_m)^*\varphi$ does not need to fast decay at infinity.

Let
$$
N_m^k=(I_m^k)^*I_m^k: \mathcal S\left(\mathbb{R}^n ; S^m\right) \rightarrow C^\infty\left(\mathbb{R}^n ; S^m\right)
$$
be the normal operator for the momentum ray transform $I_m^k$.
For $f\in\mathcal S\left(\mathbb{R}^n ; S^m\right)$, the Fourier transform
$\widehat{N_m^kf}\in{\mathcal S}'\left(\mathbb{R}^n ; S^m\right)$ is well defined at least in the distribution sense and the restriction of $\widehat{N_m^kf}$ to ${\R}^n\setminus\{0\}$ belongs to
$C^\infty\left(\mathbb{R}^n\setminus\{0\}; S^m\right)$.

\subsection{The inversion formula}
Let $\Gamma$ be Euler's Gamma function and let the operator $(-\Delta)^{1/2}$ be defined with the help of the Fourier transform by $|y|{\mathcal F}={\mathcal F}(-\Delta)^{1/2}$.
We use the definition
$$
(2l+1)!!=1\cdot3\cdots(2l+1),\quad (-1)!!=1.
$$
Let us reproduce \cite[Theorem 3.1]{JKKS1}.

\begin{theorem} \label{Th3.1}
Given integers $m\ge0$ and $n\ge2$,
 a tensor field $f\in \mathcal{S}\left(\mathbb{R}^n; S^m\right)$ is recovered from the data $(N_m^0f,N_m^1f,\dots,N_m^mf)$
by the inversion formula
\begin{equation}
f(x)=(-\Delta)^{1/2}\sum\limits_{k=0}^m D_{m,n}^k(N_m^kf)(x),
                           \label{2.3}
\end{equation}
where the linear differential operator of order $m+k$
$$
D_{m,n}^k:C^\infty({\R}^n;S^m)\to C^\infty({\R}^n;S^m)
$$
is defined by
\begin{align}\label{2.4}
    \begin{split}
        D_{m,n}^k&=c_{m,n}^k\sum\limits_{p=k}^m(n\!+\!2m\!-\!2p\!-\!3)!!
\!\!\!\\&\qquad\qquad\times\sum\limits_{q=0}^{\min(p,m-p,p-k)}\!\!\!\!
\frac{(-1)^q}{2^q q! (m\!-\!p\!-\!q)! (p\!-\!k\!-\!q)!}
\, d^{p-q}\, i^q\, j^q\, j_x^{p-k-q}\, \mbox{\rm div}^k
    \end{split}
\end{align}
with the coefficient
\begin{equation}
c_{m,n}^k=\frac{(-1)^k}{(k!)^2}\,\frac{2^{m-2}\, \Gamma\big(\frac{2m+n-1}{2}\big)}{\pi^{(n+1)/2}\, (n+2m-3)!!}
\end{equation}
and the operators \(i\), \(j\), and \(j_x\) are defined in Section \ref{PMR1}.
\end{theorem}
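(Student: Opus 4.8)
The plan is to pass to the Fourier domain and reduce the inversion formula \eqref{2.3} to an identity in the algebra of tensor operators. Under $\mathcal F$ the operator $(-\Delta)^{1/2}$ becomes multiplication by $|y|$, and every building block of $D_{m,n}^k$ in \eqref{2.4} becomes an explicit symmetric-tensor operation, so proving the theorem amounts to checking a pointwise (in the frequency variable $y$) identity asserting that $|y|\sum_{k=0}^m D_{m,n}^k$ applied to the symbols of $N_m^0,\dots,N_m^m$ returns $\widehat f(y)$. First I would record the Fourier-domain action of the operators occurring in \eqref{2.4}. With $\wh y=y/|y|$,
\[
\widehat{df}=\mathrm{i}\,i_y\widehat f,\qquad \widehat{\mathrm{div}\,f}=\mathrm{i}\,j_y\widehat f,\qquad i=i_\delta,\qquad j=j_\delta,
\]
while $j_x$, being the adjoint of symmetric multiplication by the position vector, turns into the first-order operator $\mathrm{i}\,j_{\partial_y}$. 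Thus $(-\Delta)^{1/2}D_{m,n}^k$ becomes $|y|$ times a polynomial in $i_y,j_y,i_\delta,j_\delta$ together with the differential piece coming from $j_x^{p-k-q}$.

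Next I would compute the symbol of $N_m^k$ itself. By the projection--slice theorem, for a fixed direction $\xi$ the Fourier transform of $I_m^0 f(\cdot,\xi)$ on the slice $\xi^\perp$ equals a constant times $\langle\widehat f,\xi^m\rangle$, and the weight $t^k$ in \eqref{1.1} turns into the $k$-th normal derivative $(\mathrm{i}\,\partial_s)^k$ of $\widehat f(\cdot+s\xi)$ on that slice. Composing with the back-projection $(I_m^k)^*$ inserts the factor $\langle x,\xi\rangle^k$ (a further source of $\partial_y$-derivatives) and an integration over $\xi\in\mathbb S^{n-1}$ against the Dirac mass $\delta(\langle y,\xi\rangle)$; the latter collapses to an integral over the equatorial sphere $\mathbb S^{n-2}\cap y^\perp$ and supplies the overall factor $|y|^{-1}$. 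The result is an explicit expression
\[
\widehat{N_m^k f}(y)=\frac{1}{|y|}\,\big(\text{polynomial in } i_y,j_y,i_\delta,j_\delta,j_{\partial_y}\big)\,\widehat f(y),
\]
in which the normal derivatives produced by the moments account precisely for the $j_x$-terms in \eqref{2.4}. The closed evaluation of the moment sphere integrals $\int_{\mathbb S^{n-2}}\xi^{\otimes 2m}\,\mathrm d\xi$ produces the double factorials $(n+2m-2p-3)!!$ and fixes the normalizing constant $c_{m,n}^k$.

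Finally I would substitute these symbols into $(-\Delta)^{1/2}\sum_{k}D_{m,n}^k N_m^k$ and carry out the differentiations. After cancelling the overall factor $|y|^{-1}$ against $(-\Delta)^{1/2}=|y|$ and collecting the $y$-dependence, one is left with a frequency-independent identity in the finite-dimensional algebra generated by $i_\delta,j_\delta,i_{\wh y},j_{\wh y}$ acting on $S^m$. Verifying that this weighted sum reduces to the identity is the heart of the matter, and I expect it to be the main obstacle: it is a combinatorial identity in which the triple sum over $k,p,q$ with weights $\frac{(-1)^q}{2^q q!(m-p-q)!(p-k-q)!}$ and the double factorials must telescope so that all cross terms cancel. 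I would attack it by decomposing $S^m$ into components for the rotation subgroup $\mathrm O(n-1)$ fixing $\wh y$ — that is, by degree in $\wh y$ and trace structure in $\wh y^{\perp}$ — on which the operators $i_\delta,j_\delta,i_{\wh y},j_{\wh y}$ act in a controlled way, thereby reducing the tensor computation to a one-variable hypergeometric-type summation to be verified component by component.
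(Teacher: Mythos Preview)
The present paper does not contain a proof of Theorem~\ref{Th3.1}: the theorem is quoted verbatim from the companion article \cite{JKKS1} (``Let us reproduce \cite[Theorem 3.1]{JKKS1}''), and is used here only as input to the proof of Theorem~\ref{Th3.2}. So there is no proof in this paper to compare your proposal against.

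That said, your outline follows the natural route and is consistent with how the paper treats the Fourier side (compare your translation of $d,\mathrm{div},j_x$ with the symbol $\widehat D^k_{m,n}$ in \eqref{3.3}, where indeed $d\mapsto i_y$, $\mathrm{div}\mapsto j_y$, and $j_x\mapsto \mathrm{div}_y$). However, your proposal is a plan rather than a proof. After the routine Fourier bookkeeping, the entire content of the theorem is the algebraic/combinatorial identity you flag as ``the main obstacle'', namely that the triple sum over $k,p,q$ with the given weights collapses to the identity on $S^m$. You do not carry this out; you only indicate a possible attack via the $\mathrm O(n-1)$-isotypic decomposition relative to $\widehat y$. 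Until that identity is actually established---whether by your suggested decomposition or by the method of \cite{JKKS1}---the argument is incomplete. A secondary caution: your description of $\widehat{N_m^k f}$ as ``$|y|^{-1}$ times a polynomial in $i_y,j_y,i_\delta,j_\delta,j_{\partial_y}$ applied to $\widehat f$'' sweeps under the rug the interaction between the $\partial_y$-derivatives coming from the $t^k$ and $\langle x,\xi\rangle^k$ weights and the singular factor $|y|^{-1}$; getting those commutators right is exactly where the specific combinatorics of \eqref{2.4} arise, and it deserves more care than a single sentence.
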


\subsection{The Saint Venant operator}
For integers $m$ and $r$ satisfying $0\le r\le m$, let $S^{m-r}\otimes S^m$ be the space of $(2m-r)$-tensors on ${\R}^n$ which are symmetric
in first $m-r$ and last $m$ indices. {\it The  Saint Venant operator} \eqref{1.2}
is defined by
\begin{equation}
\begin{aligned}
\lb  W^r_m f\rb_{i_1\dots i_{m-r}j_1 \dots j_m}=\sigma(i_1 \dots i_{m-r})&\sigma(j_1 \cdots j_m)\sum\limits_{l=0}^{m-r}(-1)^l\binom{m-r}{l}
\\
&\times\frac{\PD^{m-r}f_{i_1\dots i_{m-r-l}j_1\dots j_{r+l}}}{\PD x_{i_{m-r-l+1}}\dots \PD x_{i_{m-r}}\PD x_{j_{r+l+1}}\dots \PD x_{j_{m}} }.
\end{aligned}
                           \label{2.5}
\end{equation}
In particular $W^m_m$ is the identity operator.

\subsection{The main result}
\begin{theorem}\label{Th3.2}
Let $0\le r\le m$ and $n\ge2$ be integers. For $f\in{\mathcal S}({\R}^n;S^m)$, the tensor field $W^r_mf$ is recovered from the data $(N^0_mf,\dots,N^r_mf)$ by the inversion formula
$$
W^r_m f=(-\Delta)^{1/2}W^r_m\sum\limits_{k=0}^r  D_{m,n}^k(N^k_mf),
$$
where the linear differential operator $D_{m,n}^{k}$ is defined by \eqref{2.4}.
\end{theorem}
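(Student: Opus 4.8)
The plan is to derive Theorem~\ref{Th3.2} from the full inversion formula \eqref{2.3} of Theorem~\ref{Th3.1} by showing that the terms with $k>r$ are annihilated by $W^r_m$. Since $W^r_m$ is a differential operator with constant coefficients, it commutes with the Fourier multiplier $(-\Delta)^{1/2}$; applying $W^r_m$ to \eqref{2.3} therefore gives
\[
W^r_m f=(-\Delta)^{1/2}\,W^r_m\sum_{k=0}^m D^k_{m,n}(N^k_mf).
\]
Consequently it is enough to establish the operator identity $W^r_m D^k_{m,n}=0$ for every $k$ with $r<k\le m$, after which only the partial sum $\sum_{k=0}^r$ survives, which is precisely the claimed formula. (For $r=m$ the tail is empty and the statement is just Theorem~\ref{Th3.1}.)

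The first step is a structural observation about \eqref{2.4}. In each summand the power of the inner derivative $d$ equals $p-q$, and the summation range $q\le\min(p,m-p,p-k)$ forces $q\le p-k$, hence $p-q\ge k$. Thus every summand of $D^k_{m,n}$ begins, on the left, with at least $d^k$. For $k\ge r+1$ I would factor $d^{p-q}=d^{r+1}\circ d^{\,p-q-r-1}$ in each summand, so that $D^k_{m,n}=d^{r+1}\circ G^k_{m,n}$ for a differential operator $G^k_{m,n}$; in particular $D^k_{m,n}(N^k_m f)$ lies in the range of $d^{r+1}$.

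The second step is the annihilation identity $W^r_m\circ d^{r+1}=0$, viewed as operators $C^\infty(\R^n;S^{m-r-1})\to C^\infty(\R^n;S^{m-r}\otimes S^m)$. This is the elementary inclusion $\mathrm{range}\,d^{r+1}\subseteq\ker W^r_m$ underlying the kernel description $(I^0_mf,\dots,I^r_mf)=0\iff W^r_mf=0$ of \cite{Mishra:Sahoo:2023}; it can also be checked directly from \eqref{2.5} by substituting $f=d^{r+1}v$ and observing that, since partial derivatives commute, the two outer symmetrizations together with the alternating weights $(-1)^l\binom{m-r}{l}$ make the sum over $l$ cancel termwise. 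Combining the two steps, for $r<k\le m$ we obtain $W^r_m D^k_{m,n}=W^r_m d^{r+1} G^k_{m,n}=0$, which removes the tail and completes the proof.

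I expect the main obstacle to be the explicit verification of $W^r_m d^{r+1}=0$: one has to match the derivative multi-indices produced by $d^{r+1}$ against the fixed index pattern in \eqref{2.5} and confirm that the binomial cancellation is exact after symmetrization. A minor additional point is that $N^k_m f$ is only smooth rather than Schwartz, so I would justify the commutation $(-\Delta)^{1/2}W^r_m=W^r_m(-\Delta)^{1/2}$ on $\widehat{N^k_m f}$ (smooth away from the origin and tempered) by noting that both operators act as multipliers, namely a matrix-valued polynomial in $y$ and the scalar $|y|$ respectively.
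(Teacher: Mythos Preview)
Your strategy is exactly the paper's: apply $W^r_m$ to \eqref{2.3}, commute it past $(-\Delta)^{1/2}$, and reduce to the operator identity $W^r_m D^k_{m,n}=0$ for $k>r$; then observe from \eqref{2.4} that each summand of $D^k_{m,n}$ carries $d^{p-q}$ with $p-q\ge k\ge r+1$, so it suffices to prove $W^r_m d^{r+1}=0$. The paper makes precisely this reduction (after Fourier transform, writing $\widehat D^k_{m,n}=i_y^{r+1}B^k_{m,n}$ and reducing to $\widehat W^r_m i_y^{r+1}=0$).

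Where you diverge is in justifying that key annihilation. You invoke the kernel description of \cite{Mishra:Sahoo:2023} (combined with the elementary integration-by-parts fact that $I^k_m(d^{r+1}v)=0$ for $k\le r$, which you should state explicitly). This is legitimate and short. The paper instead proves $\widehat W^r_m i_y^{r+1}=0$ from scratch: testing against $u=x^{m-r}$, $v=z^m$, decomposing $x,z$ along and orthogonal to $y$, and finally establishing a combinatorial identity (Lemma~\ref{L9.1}) via a contour-integral generating-function argument. That self-contained computation is the bulk of Section~\ref{sec:main:proof}; your route trades it for a citation. Your alternative ``direct verification'' sketch (cancellation from the alternating weights after symmetrization) is too vague as written: this is exactly the computation the paper carries out, and it is not a termwise cancellation but reduces to the nontrivial Lemma~\ref{L9.1}. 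If you want a short direct proof, pass to polynomials and sum over $l$ first to get $\langle\eta,x\rangle^r\big(\langle\eta,y\rangle\langle\xi,x\rangle-\langle\xi,y\rangle\langle\eta,x\rangle\big)^{m-r}$, which is killed by $\partial_y^{r+1}$ since the second factor is $\partial_y$-constant and the first has degree $r$.
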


Theorem \ref{Th3.2} is a generalization of Theorem \ref{Th3.1} since $W^m_m$ is the identity operator.
In the case of $r=0$ Theorem \ref{Th3.2} actually coincides with \cite[Theorem 2.12.3]{Sharafutdinov:Book:1994}.

The first step in the proof of Theorem \ref{Th3.2} is as follows. Since $W^r_m$ is a differential operator with constant coefficients, it commutes with $(-\Delta)^{1/2}$.
Applying the operator $W^r_m$ to the equality \eqref{2.3}, we write the result in the form
$$
W^r_mf=(-\Delta)^{1/2}W^r_m\sum\limits_{k=0}^r  D_{m,n}^k(N^k_mf)+(-\Delta)^{1/2}W^r_m\sum\limits_{k=r+1}^m  D_{m,n}^k(N^k_mf).
$$
Thus, to prove Theorem \ref{Th3.2}, it suffices to demonstrate that
\begin{equation}
W^r_m D_{m,n}^k=0\quad\mbox{for}\quad 0\le r<k\le m.
                           \label{2.6}
\end{equation}
The proof of \eqref{2.6} is presented in the next section. 

\section{Proof of Theorem \ref{Th3.2}}\label{sec:main:proof}

Applying the Fourier transform to \eqref{2.5}, we obtain
$$
\widehat{ W^r_m f}={\textsl i}^{m-r}\,{\widehat W}^r_m\widehat f,
$$
where ${\textsl i}$ is the imaginary unit and the purely algebraic operator
$$
{\widehat W}^r_m={\widehat W}^r_m(y):S^m\to S^{m-r}\otimes S^m\quad(y\in{\R}^n)
$$
is defined by
$$
\begin{aligned}
({\widehat W}^r_mh)_{i_1\dots i_{m-r}j_1 \dots j_m}&=\sigma(i_1 \dots i_{m-r})\sigma(j_1 \cdots j_m)\sum\limits_{l=0}^{m-r}(-1)^l
\binom{m\!-\!r}{l}\times\\
&\quad\times h_{i_1\dots i_{m-r-l}j_1\dots j_{r+l}}\,y_{i_{m-r-l+1}}\dots y_{i_{m-r}} y_{j_{r+l+1}}\dots y_{j_{m}}.
\end{aligned}
$$
This can be written in the coordinate-free form
\begin{equation}
\langle{\widehat W}^r_mh,u\otimes v\rangle=
\sum\limits_{l=0}^{m-r}(-1)^l
\binom{m\!-\!r}{l}\langle h,(j_y^lu)(j_y^{m-r-l}v)\rangle
\quad\mbox{for}\ u\in S^{m-r}\ \mbox{and}\ v\in S^m.
                           \label{3.1}
\end{equation}

On the other hand, applying the Fourier transform to \eqref{2.6}, we see that \eqref{2.6} is equivalent to the statement
\begin{equation}
{\widehat W}^r_m {\widehat D}_{m,n}^k=0\quad\mbox{for}\quad 0\le r<k\le m,
                           \label{3.2}
\end{equation}
where the operator ${\widehat D}_{m,n}^k$ is defined by
\begin{align}  \label{3.3}
    \begin{split}
        \widehat D^k_{m,n}&=c^k_{m,n}\sum\limits_{p=k}^m(-1)^p(n\!+\!2m\!-\!2p\!-\!3)!! \!\!\!\!\\&\qquad\qquad\times\sum\limits_{q=0}^{\min(p,m-p,p-k)}\!\!\!\!\!\!\!\!\!
\frac{1}{2^q\,q!(m\!-\!p\!-\!q)!(p\!-\!k\!-\!q)!}\,
i_y^{p-q}i^qj^q\,\mbox{\rm div}^{p-k-q}\,j_y^k,
    \end{split}
\end{align}
see \cite[formula (8.7)]{JKKS1}.

We will use only one property of the operator ${\widehat D}_{m,n}^k$: as is seen from \eqref{3.3},
\begin{equation}
    {\widehat D}_{m,n}^k=i_y^{r+1}\,B_{m,n}^k,\qquad\quad \text{for } 0\le r<k,
\end{equation}
with some linear operator $B_{m,n}^k$.
Therefore, to prove \eqref{3.2}, it suffices to demonstrate that
\begin{equation}
{\widehat W}^r_m i_y^{r+1}=0\quad\mbox{for}\quad 0\le r\le m-1.
                           \label{3.4}
\end{equation}
By \eqref{3.1},
$$
\begin{aligned}
\langle{\widehat W}^r_m i_y^{r+1} h,u\otimes v\rangle&=
\sum\limits_{l=0}^{m-r}(-1)^l \binom{m\!-\!r}{l}
\langle i_y^{r+1}h,(j_y^lu)(j_y^{m-r-l}v)\rangle\\
&=\Big\langle h,\sum\limits_{l=0}^{m-r}(-1)^l
\binom{m\!-\!r}{l}\, j_y^{r+1}\big((j_y^lu)(j_y^{m-r-l}v)\big)\Big\rangle.
\end{aligned}
$$
This means that \eqref{3.4} holds for any $h\in S^{m-1}$ if and only if
\begin{equation}
\sum\limits_{l=0}^{m-r}(-1)^l
\binom{m\!-\!r}{l}\, j_y^{r+1}\big((j_y^lu)(j_y^{m-r-l}v)\big)=0
\quad\mbox{for any}\ u\in S^{m-r}\ \mbox{and}\ v\in S^m\ (0\le r<m).
                           \label{3.5}
\end{equation}
The left-hand side of \eqref{3.5} is homogeneous of degree $m+1$ in $y$. It suffices to prove \eqref{3.5} for a unit vector $y$. In what follows, $y\in{\R}^n$ is a fixed vector satisfying $|y|=1$.

The complex vector space $S^m=S^m{\R}^n$ is generated by powers $x^m\ (x\in{\R}^n)$. Therefore \eqref{3.5} is equivalent to the statement
$$
\sum\limits_{l=0}^{m-r}(-1)^l
\binom{m\!-\!r}{l}\, j_y^{r+1}\big((j_y^lx^{m-r})(j_y^{m-r-l}z^m)\big)=0
\quad\mbox{for any}\ x,z\in {\R}^n\quad (0\le r<m).
$$
Since $j_y^lx^{m-r}=\langle x,y\rangle^l x^{m-r-l}$ and $j_y^{m-r-l}z^m=\langle z,y\rangle^{m-r-l} z^{r+l}$, the latter statement can be written as
\begin{equation}
\sum\limits_{l=0}^{m-r}(-1)^l
\binom{m\!-\!r}{l}\,\langle x,y\rangle^l \langle z,y\rangle^{m-r-l} j_y^{r+1}(x^{m-r-l}z^{r+l})=0
                           \label{3.6}
\end{equation}
for any $x,z\in {\R}^n$ and $0\le r<m$.
The equality \eqref{3.6} holds in the case $\langle x,y\rangle=\langle z,y\rangle=0$ since all summands on the left-hand side are equal to zero.

Next, we prove \eqref{3.6} in the case $\langle x,y\rangle=0$ but $\langle z,y\rangle\neq0$. In this case \eqref{3.6} looks as follows:
\begin{equation}
j_y^{r+1}(x^{m-r}z^r)=0.
                           \label{3.7}
\end{equation}
Let us write \eqref{3.7} in coordinates
$$
y^{i_1}\dots y^{i_{r+1}}\sum\limits_{\pi\in\Pi_m}x_{i_{\pi(1)}}\dots x_{i_{\pi(m-r)}}z_{i_{\pi(m-r+1)}}\dots z_{i_{\pi(m)}}=0.
$$
After pulling the factor $y^{i_1}\dots y^{i_{r+1}}$ inside the sum, every summand contain at least one factor of the form $y^kx_k=0$. This proves \eqref{3.7}.

Quite similarly \eqref{3.6} is proved in the case $\langle x,y\rangle\neq0$ but $\langle z,y\rangle=0$.

Now, we prove \eqref{3.6} in the general case when $\alpha=\langle x,y\rangle\neq0$ and $\beta=\langle z,y\rangle\neq0$.
We represent vectors $x,z\in{\R}^n$ in the form
$$
x=\alpha y+x',\ \langle x',y\rangle=0;\quad
z=\beta y+z',\ \langle z',y\rangle=0.
$$
From this
$$
\begin{aligned}
x^{m-r-l}z^{r+l}&=(\alpha y+x')^{m-r-l}(\beta y+z')^{r+l}\\
&=\sum\limits_{p=0}^{m-r-l}\sum\limits_{q=0}^{r+l}\binom{m\!-\!r-l}{p}\binom{\!r+l}{q}
\alpha^{m-r-l-p}\beta^{r+l-q}\,y^{m-p-q}x'{}^pz'{}^q.
\end{aligned}
$$
Substituting this expression into \eqref{3.6}, we obtain (up to a factor $\alpha^{m-r} \beta^{m}$)
$$
\sum\limits_{l=0}^{m-r}\sum\limits_{p=0}^{m-r-l}\sum\limits_{q=0}^{r+l}(-1)^l
\binom{m\!-\!r}{l}\binom{m\!-\!r\!-\!l}{p}\binom{r\!+\!l}{q}\,
\alpha^{-p}\beta^{-q}\,j_y^{r+1}(y^{m-p-q}x'{}^pz'{}^q)=0.
$$
Denoting $\tilde x=\alpha^{-1}x'$ and $\tilde z=\beta^{-1}z'$, this can be written in the form
$$
\sum\limits_{l=0}^{m-r}\sum\limits_{p=0}^{m-r-l}\sum\limits_{q=0}^{r+l}(-1)^l
\binom{m\!-\!r}{l}\binom{m\!-\!r\!-\!l}{p}\binom{r\!+\!l}{q}\,
j_y^{r+1}(y^{m-p-q}\tilde x{}^p\tilde z{}^q)=0.
$$
To simplify notations, we denote $\tilde x$ and $\tilde z$ again by $x$ and $z$ respectively. Thus, we have to prove the statement
\begin{equation}
\sum\limits_{l=0}^{m-r}\sum\limits_{p=0}^{m-r-l}\sum\limits_{q=0}^{r+l}(-1)^l
\binom{m\!-\!r}{l}\binom{m\!-\!r\!-\!l}{p}\binom{r\!+\!l}{q}\,
j_y^{r+1}(y^{m-p-q}x^pz^q)=0
                           \label{3.8}
\end{equation}
for $x,z\in y^\bot$ and $0\le r<m$.

Since the last factor $j_y^{r+1}(y^{m-p-q}x^pz^q)$ on the left-hand side of \eqref{3.8} is independent of $l$, it makes sense to change the order of summations. We first change the order of summations over $l$ and $p$
$$
\sum\limits_{p=0}^{m-r}\sum\limits_{l=0}^{m-r-p}\sum\limits_{q=0}^{r+l}(-1)^l
\binom{m\!-\!r}{l}\binom{m\!-\!r\!-\!l}{p}\binom{r\!+\!l}{q}\,
j_y^{r+1}(y^{m-p-q}x^pz^q)=0
$$
and then change the order of summations over $l$ and $q$
$$
\sum\limits_{p=0}^{m-r}\sum\limits_{q=0}^{m-p}\sum\limits_{l=\max(0,q-r)}^{m-r-p}(-1)^l
\binom{m\!-\!r}{l}\binom{m\!-\!r\!-\!l}{p}\binom{r\!+\!l}{q}\,
j_y^{r+1}(y^{m-p-q}x^pz^q)=0
$$
This can be written in the form
\begin{equation}
\sum\limits_{p=0}^{m-r}\sum\limits_{q=0}^{m-p}
C(m,r,p,q)\,j_y^{r+1}(y^{m-p-q}x^pz^q)=0
\quad(x,z\in y^\bot,0\le r<m),
                           \label{3.9}
\end{equation}
where
\begin{align} \label{3.10}
    \begin{split}
     & C(m,r,p,q)\\&\quad=\!\!\!\sum\limits_{l=\max(0,q-r)}^{m-r-p}\!\!\!(-1)^l
\binom{m\!-\!r}{l}\binom{m\!-\!r\!-\!l}{p}\binom{r\!+\!l}{q}
\quad(0\le p\le m-r, 0\le q\le m-p).  
    \end{split}
\end{align}
From \eqref{3.9} and \eqref{3.10}, for $x,z\in y^\bot$, we have
\begin{equation}
j_y^{r+1}(y^{m-p-q}x^pz^q)=0\quad\mbox{\rm if}\quad p\ge0,q\ge0,p+q\le m,r+1> m-p-q.
                           \label{3.11}
\end{equation}
 Indeed, writing in coordinates
\begin{align*}
   & (y^{m-p-q}x^pz^q)_{i_1\dots i_m}
\\ &\qquad\qquad=\frac{1}{m!}\sum\limits_{\pi\in\Pi_m}
y_{i_{\pi(1)}}\dots y_{i_{\pi(m-p-q)}}
x_{i_{\pi(m-p-q+1)}}\dots x_{i_{\pi(m-q)}}
z_{i_{\pi(m-q+1)}}\dots z_{i_{\pi(m)}},
\end{align*}
we have
$$
\begin{aligned}
&\big(j_y^{r+1}(y^{m-p-q}x^pz^q)\big)_{i_{m-r}\dots i_m}\\
&=\frac{1}{m!}\sum\limits_{\pi\in\Pi_m}y^{i_1}\dots y^{i_{r+1}}\,
y_{i_{\pi(1)}}\dots y_{i_{\pi(m-p-q)}}
x_{i_{\pi(m-p-q+1)}}\dots x_{i_{\pi(m-q)}}
z_{i_{\pi(m-q+1)}}\dots z_{i_{\pi(m)}}.
\end{aligned}
$$
In the case of $r+1>m-p-q$, every summand of the sum contains either a factor of the form $y^jx_j=0$ or a factor of the form $y^jz_j=0$.

In virtue of \eqref{3.11}, the summation in \eqref{3.9} can be restricted to $(p,q)$ satisfying
\begin{equation}
p\ge0,\quad q\ge0,\quad p+q\le m-r-1.
                           \label{3.12}
\end{equation}
In particular, $r<m$ and $p\le m-r-1$.
In other words, \eqref{3.9} is equivalent to the statement
\begin{equation}
\sum\limits_{p=0}^{m-r-1}\sum\limits_{q=0}^{m-r-p-1}
C(m,r,p,q)\,j_y^{r+1}(y^{m-p-q}x^pz^q)=0
\quad(x,z\in y^\bot,0\le r<m).
                           \label{3.13}
\end{equation}

\begin{lemma} \label{L9.1}
For integers $m,r,p,q$ satisfying \eqref{3.12} and $0\le r<m$, the following equality holds:
\begin{equation}
\sum\limits_{l=\max(0,q-r)}^{m-r-p}(-1)^l
\binom{m\!-\!r}{l}\binom{m\!-\!r\!-\!l}{p}\binom{r\!+\!l}{q}=0.
                           \label{3.14}
\end{equation}
\end{lemma}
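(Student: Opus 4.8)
The plan is to read \eqref{3.14} as the vanishing of a top-order finite difference of a polynomial, once the dependence on $p$ has been peeled off. Writing $s=m-r$ for brevity, the first step is to apply the subset-of-a-subset identity
\[
\binom{s}{l}\binom{s-l}{p}=\binom{s}{p}\binom{s-p}{l},
\]
which holds for all nonnegative integers under the usual convention $\binom{a}{b}=0$ for $b<0$ or $b>a$, and follows at once by expanding both sides into factorials. This lets me pull the factor $\binom{m-r}{p}$ out of the sum in \eqref{3.14}, reducing the claim to
\[
\sum_{l}(-1)^l\binom{a}{l}\binom{r+l}{q}=0,\qquad a:=m-r-p.
\]

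The second step is to normalize the range of summation to $0\le l\le a$. On the one hand, $\binom{a}{l}=0$ for $l>a$, so the upper limit $m-r-p=a$ may be kept as is. On the other hand, for $0\le l<q-r$ one has $r+l<q$, hence $\binom{r+l}{q}=0$; thus lowering the starting index from $\max(0,q-r)$ to $0$ only adjoins vanishing terms and does not change the sum. Hence it suffices to prove
\[
\sum_{l=0}^{a}(-1)^l\binom{a}{l}\,P(l)=0,\qquad P(l):=\binom{r+l}{q}.
\]

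The third step invokes the degree count. The function $P(l)=\binom{r+l}{q}=\tfrac{1}{q!}(r+l)(r+l-1)\cdots(r+l-q+1)$ is a polynomial in $l$ of degree exactly $q$. The constraint \eqref{3.12}, namely $p+q\le m-r-1$, gives $q\le a-1<a$, so $\deg P<a$. Recalling the finite-difference formula
\[
\sum_{l=0}^{a}(-1)^l\binom{a}{l}P(l)=(-1)^{a}(\Delta^{a}P)(0),
\]
where $\Delta$ is the forward difference operator $\Delta f(l)=f(l+1)-f(l)$, together with the elementary fact that $\Delta^{a}$ annihilates every polynomial of degree strictly less than $a$, the right-hand side is $0$. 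This closes the argument.

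There is no serious obstacle here: the proof is a two-line combinatorial reduction followed by a standard vanishing lemma. The only points requiring care are bookkeeping ones, namely verifying that the subset-of-a-subset identity is valid across the relevant edge cases, checking that extending the lower summation limit genuinely adjoins only zero terms, and confirming that hypothesis \eqref{3.12} is exactly what forces $q<a$. That single inequality is the quantitative input that makes the alternating sum collapse, and it is precisely why the lemma fails without the range restriction recorded in \eqref{3.12}.
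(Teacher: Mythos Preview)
Your proof is correct and takes a genuinely different route from the paper. The paper applies the Egorychev method: it writes $\binom{r+l}{q}$ and $\binom{m-r-l}{p}$ as contour integrals, sums the resulting geometric-type series in $l$ via the binomial theorem to obtain a factor $(w-z)^{m-r}$, and then observes that the remaining integrand $(1+z)^r z^{m-r-p-q-1}$ is holomorphic precisely when $p+q\le m-r-1$, forcing the integral to vanish. Your argument instead factors out the $p$-dependence via the subset-of-a-subset identity $\binom{m-r}{l}\binom{m-r-l}{p}=\binom{m-r}{p}\binom{m-r-p}{l}$ and then recognizes the residual sum as the $a$-th finite difference of the degree-$q$ polynomial $l\mapsto\binom{r+l}{q}$, which vanishes because $q<a=m-r-p$. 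Your approach is more elementary and makes the role of the inequality $p+q\le m-r-1$ completely transparent as a degree bound; the paper's contour-integral approach is more mechanical and would generalize more readily to identities where no obvious polynomial structure is visible.
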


With the help of Lemma \ref{L9.1}, we immediately complete the proof of Theorem \ref{Th3.2}. Indeed, by comparing \eqref{3.10} and \eqref{3.14}, we observe that all coefficients $C(m,r,p,q)$ participating in \eqref{3.13} are equal to zero. This proves \eqref{3.9}. As shown earlier, \eqref{3.9} implies the statement of Theorem \ref{Th3.2}.

\begin{proof}[Proof of Lemma \ref{L9.1}]
We assume binomial coefficients $\binom{k}{p}$ to be defined for all integers $k$ and $p$ under the agreement
$$
\binom{k}{p}=0\quad \mbox{if either}\  k<0\ \mbox{or}\ p<0\ \mbox{or}\ k<p.
$$
Then
\begin{equation}
\begin{aligned}
C(m,r,p,q)&=\sum\limits_{l=\max(0,q-r)}^{m-r-p}(-1)^l
\binom{m\!-\!r}{l}\binom{m\!-\!r\!-\!l}{p}\binom{r\!+\!l}{q}\\
&=\sum\limits_{l=-\infty}^\infty(-1)^l
\binom{m\!-\!r}{l}\binom{r\!+\!l}{q}\binom{m\!-\!r\!-\!l}{p}.
\end{aligned}
                           \label{3.15}
\end{equation}
From \cite[p. 10]{Egorychev:1984}, we have for $0<\varepsilon\ll 1$,
$$
\binom{n}{k}=\frac{1}{2 \pi \I} \int\limits_{|z|=\varepsilon} \frac{(1+z)^n}{z^{k+1}}\, d z.
$$
In particular,
$$
\binom{r+l}{q}=\frac{1}{2\pi \I}\int\limits_{|z|=\epsilon}\frac{(1+z)^{r+l}}{z^{q+1}}\,dz,\quad
\binom{m-r-l}{p}=\frac{1}{2\pi \I}\int\limits_{|w|=\epsilon}\frac{(1+w)^{m-r-l}}{w^{p+1}}\,dw.
$$
With the help of these formulas, we transform \eqref{3.15} as follows:
$$
\begin{aligned}
C(m,r,p,q)&
=-\frac{1}{(2\pi)^2}\int\limits_{|z|=\epsilon}\int\limits_{|w|=\epsilon}
\frac{(1+z)^r(1+w)^{m-r}}{z^{q+1}w^{p+1}}
\sum\limits_{l=-\infty}^\infty(-1)^l
\binom{m\!-\!r}{l}\Big(\frac{1+z}{1+w}\Big)^l\,dw\,dz\\
&=-\frac{1}{(2\pi)^2}\int\limits_{|z|=\epsilon}\int\limits_{|w|=\epsilon}
\frac{(1+z)^r(1+w)^{m-r}}{z^{q+1}w^{p+1}}
\Big(1-\frac{1+z}{1+w}\Big)^{m-r}\,dw\,dz\\
&=-\frac{1}{(2\pi)^2}\int\limits_{|z|=\epsilon}\int\limits_{|w|=\epsilon}
\frac{(1+z)^r(w-z)^{m-r}}{z^{q+1}w^{p+1}}\,dw\,dz\\
&=-\frac{1}{(2\pi)^2}\int\limits_{|z|=\epsilon}\int\limits_{|w|=\epsilon}
\frac{(1+z)^r}{z^{q+1}w^{p+1}}
\sum\limits_{l=-\infty}^\infty(-1)^l
\binom{m\!-\!r}{l}z^lw^{m-r-l}\,dw\,dz.
\end{aligned}
$$
We perform the integration with respect to $w$. By the Cauchy integral formula, the only summand that survives corresponds to $l=m-r-p$. Thus,
$$
C(m,r,p,q)=\frac{(-1)^{m-r-p}}{2\pi{\mathrm i}}\binom{m\!-\!r}{p}\int\limits_{|z|=\epsilon}
(1+z)^rz^{m-r-p-q-1}\,dz.
$$
The integrand is a holomorphic function if $p+q\le m-r-1$. Therefore, $C(m,r,p,q)=0$  if $p+q\le m-r-1$.
\end{proof}

\bibliography{math}

\begin{thebibliography}{JKKS24}

\bibitem[Ego84]{Egorychev:1984}
G.~P. Egorychev.
\newblock {\em Integral representation and the computation of combinatorial sums}, volume~59 of {\em Translations of Mathematical Monographs}.
\newblock American Mathematical Society, Providence, RI, 1984.
\newblock Translated from the Russian by H. H. McFadden; Translation edited by L. J. Leifman.

\bibitem[JKKS24]{JKKS1}
Shubham~R. Jathar, Manas Kar, Venkateswaran~P. Krishnan, and Vladimir~A. Sharafutdinov.
\newblock Normal operators for momentum ray transforms, i: The inversion formula, 2024.

\bibitem[MS21]{Mishra:Sahoo:2021}
Rohit~Kumar Mishra and Suman~Kumar Sahoo.
\newblock Injectivity and range description of integral moment transforms over {$m$}-tensor fields in {$\mathbb{R}^n$}.
\newblock {\em SIAM J. Math. Anal.}, 53(1):253--278, 2021.

\bibitem[MS23]{Mishra:Sahoo:2023}
Rohit~Kumar Mishra and Suman~Kumar Sahoo.
\newblock The generalized {S}aint {V}enant operator and integral moment transforms.
\newblock {\em Proc. Amer. Math. Soc.}, 151(1):189--199, 2023.

\bibitem[Sha94]{Sharafutdinov:Book:1994}
V.~A. Sharafutdinov.
\newblock {\em Integral geometry of tensor fields}.
\newblock Inverse and Ill-posed Problems Series. VSP, Utrecht, 1994.

\end{thebibliography}

\bibliographystyle{alpha}
\end{document}